\documentclass[A4,12pt]{article}
\usepackage{amsmath,amssymb,amsfonts,amsthm,graphicx}
\usepackage{color}
\usepackage{epsfig}
\usepackage{epstopdf}

\usepackage[top=2cm, bottom=2cm, left=2cm, right=2cm]{geometry}

\newtheorem{theorem}{Theorem}[section]
\newtheorem{lemma}[theorem]{Lemma}

\theoremstyle{definition}
\newtheorem{definition}[theorem]{Definition}

\newtheorem{corollary}[theorem]{Corollary}
\newtheorem{example}[theorem]{Example}

\theoremstyle{remark}

\usepackage{algorithm}
\usepackage{algpseudocode}
\algnewcommand\algorithmicswitch{\textbf{switch}}
\algnewcommand\algorithmiccase{\textbf{case}}
\algnewcommand\algorithmicassert{\texttt{assert}}
\algnewcommand\Assert[1]{\State \algorithmicassert(#1)}%

\algdef{SE}[SWITCH]{Switch}{EndSwitch}[1]{\algorithmicswitch\ #1\ \algorithmicdo}{\algorithmicend\ \algorithmicswitch}%
\algdef{SE}[CASE]{Case}{EndCase}[1]{\algorithmiccase\ #1}{\algorithmicend\ \algorithmiccase}%
\algtext*{EndSwitch}%
\algtext*{EndCase}%

\title{A note on bipartite graphs whose [1, k]-domination number equal to their number of vertices}

\author{Narges Ghareghani$^{*,1}${\footnote{{$^*$Corresponding author.}}},  Iztok Peterin$^{3,4}$ and Pouyeh Sharifani$^{5}$\\
\footnotesize{$^{1}$ Department of
Industrial design, College of Fine Arts, University of Tehran, Tehran, Iran} \\
\footnotesize{$^{3}$ Faculty of Electrical Engineering and Computer Science, University of Maribor, Maribor, Slovenia}\\
\footnotesize{$^{4}$ Institute of Mathematics, Physics and Mechanics, Ljubljana, Slovenia}\\
\footnotesize{$^{5}$Department of Computer Science, Yazd University, Yazd, Iran.}\\
\footnotesize{E-mails:, ghareghani@ut.ac.ir, ghareghani@ipm.ir, iztok.peterin@um.si, pouyeh.sharifani@gmail.com}}
\date{}

\begin{document}

	\maketitle
	
\begin{abstract}
A subset $D$ of the vertex set $V$ of a graph $G$ is called an $[1,k]$-dominating set if every vertex from $V-D$ is adjacent to at least one vertex and at most $k$ vertices of $D$. A $[1,k]$-dominating set with the minimum number of vertices is called a $\gamma_{[1,k]}$-set and the number of its vertices is the $[1,k]$-domination number $\gamma_{[1,k]}(G)$ of $G$.  In this short note we show that the decision problem whether $\gamma_{[1,k]}(G)=n$ is an $NP$-hard problem, even for bipartite graphs. Also, a simple construction of a bipartite graph $G$ of order $n$ satisfying $\gamma_{[1,k]}(G)=n$ is given for every integer $n\geq (k+1)(2k+3)$.
\end{abstract}

\noindent\textbf{Keywords:} domination; $[1,k]$-domination number; $[1,k]$-total domination number; bipartite graphs

\noindent\textbf{AMS Subject Classification Numbers:} 05C69

\section{Introduction}

Let $G$ be a graph with vertex set $V(G)$ and edge set $E(G)$. A subset $D$ of $V(G)$ is called a \emph{dominating set}, if every vertex from $V(G)-D$ has at least one neighbor in $D$. The minimum cardinality of a dominating set is called the \emph{domination number} of $G$ and is denoted by $\gamma(G)$. A dominating set $D$ of $G$ is called a $[1,k]$-\emph{dominating set} if every vertex of $V-D$ is adjacent to at most $k$ vertices of $D$. The minimum cardinality of a $[1,k]$-dominating set is  the $[1,k]$-\emph{domination number} of $G$ and  denoted by $\gamma_{[1,k]}(G)$. We call a $[1,k]$-dominating set of cardinality $\gamma_{[1,k]}(G)$ a $\gamma_{[1,k]}(G)$-\emph{set}. Clearly $\gamma(G) \leq \gamma_{[1,k]}(G) \leq |V(G)|$, which are the trivial bounds for $\gamma_{[1,k]}(G)$.

The invariant $\gamma_{[1,k]}(G)$ was introduced by Chellali et al. in \cite{chellali20131} in the more general setting of the $[j, k]$-domination number of a graph. They proved that computing $\gamma_{[1,2]}(G)$ is an NP-complete problem. Among other results, it was shown that the trivial bounds are strict for some graphs in the case of $k=2$. They also posed several  questions; one of them was to characterize graphs for which the trivial lower bound is strict for $k=2$, that is $\gamma_{[1,2]}(G)=\gamma(G)$. Recently, see \cite{etesami2019optimal}, it was shown that there is no polynomial recognition algorithm for graphs graphs with $\gamma_{[1,k]}(G)=\gamma(G)$ unless $P=NP$.

Some other  problems from \cite{chellali20131} have been considered in \cite{bishnu, chellali2014independent, goharshady, yang20141}. For instance, in \cite{yang20141} authors find planar graphs and bipartite graphs of order $n$ with $\gamma_{[1,2]}(G)=n$. More precisely, for integer $n$ which is sufficiently large, they construct a bipartite graph $G$ of order $n$ with $\gamma_{[1,2]}(G) =n$. The construction is complicated and work only for large $n$. 

In this note we present a simple construction of a bipartite graph $G$ of order $n$ with $\gamma_{[1,k]}(G)=n$ for any integers $k\geq 2$ and  $n\geq (k+1)(2k+3)$. Hence, we generalize and simplify some results given in \cite{yang20141}. We also show that the decision problem $\gamma_{[1,k]}(G)=n$ is NP-hard for a given bipartite graph $G$ of order $n$, $n>k\geq 2$.

\section{Preliminaries}\label{notation}

Let $G$ be a simple graph with vertex set $V(G)$ and edge set $E(G)$. An {\it empty graph} on $n$ vertices $\overline{K_n}$ consists of $n$ isolated vertices with no edges. A tree which has exactly one vertex of degree greater than two is said to be  {\it star-like}. The vertex of maximum degree of such a tree is called the {\it central vertex}. The graph $T-v$, where $T$ is a star-like tree and $v$ its central vertex, contains disjoint paths $P_{n_1}, \ldots ,P_{n_k}$ and is denoted by $S(n_1,\ldots ,n_k)$. 

A subset $D$ of $V(G)$ is called a {\it total dominating set} if every $v\in V(G)$ is adjacent to a vertex from $D$. The minimum cardinality of a total dominating set in graph $G$ is denoted by $\gamma_t(G)$ and is called the \emph{total domination number}. A total dominating set $D\subseteq V(G)$ is a \emph{total} $[1,k]$-\emph{dominating set}, if for every vertex $v\in V(G)$ is adjacent to at most $k$ vertices from $D$.  While an $[1,k]$-dominating set exists for every graph $G$, there exist graphs which do not have any total $[1,k]$-dominating sets. By $\gamma_{t[1,k]}(G)$ we denote the minimum cardinality of a total $[1,k]$-dominating set (if it exist), it is $\infty$ if no total $[1,k]$-dominating set exists. An example of a graph $G_1\cong S(2,2,2)$ with $\gamma_{t[1,2]}(G_1)=\infty$ is presented on Figure \ref{figS(2,2,2)}.

\begin{figure}[h!]
   \centering
     \includegraphics[width=0.3\textwidth]{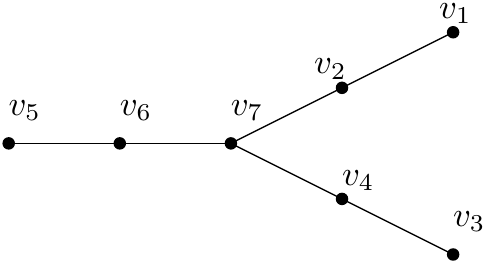}
      \caption{Bipartite graph $G_1\cong S(2,2,2)$ with $\gamma_{t[1,2]}(G_1)=\infty$.}
      \label{figS(2,2,2)}
\end{figure}



The {\it lexicographic product} of two graphs $G$ and $H$, denoted by $G\circ H$, is a graph with the vertex set  $V(G \circ H) = V(G) \times V(H)$, where two vertices $(g,h)$ and $(g',h')$ are adjacent in $G\circ H$ if $gg'\in E(G)$ or $g=g'$ and $hh'\in E(H)$. It follows directly from the definition of the lexicographic product that $G\circ H$ is bipartite if and only if one factor is the empty graph $\overline{K_t}$ and the other is bipartite. Moreover, for a graph $G$ on at least two vertices, the graph $G\circ H$ is connected and bipartite if and only if $G$ is connected and bipartite and $H\cong\overline{K_t}$. See \cite{hammack2011handbook} for more information about lexicographic and other products.

For any $h_0 \in V(H)$, we call the set $G^{h_0}=\{(g,h_0)\in V(G \circ H): g\in V(G)\}$ a {\it$G$-layer} of the graph $G\circ H$. Similarly, for $g_0 \in V(G)$, we call the set $H^{g_0} = \left\{ (g_0,h) \in V(G \circ H) : h \in V(H) \right\}$ an {\it $H$-layer} of the graph $G\circ H$.

Recently, see \cite{GIPS}, $\gamma_{[1,k]}(G\circ H)$ was described as an optimization problem of some partitions of $V(G)$. For some special cases it is possible to present $\gamma_{[1,k]}(G\circ H)$ as an invariant of $G$. In particular this is possible when $\gamma_{[1,k]}(H)>k$ and $H$ contains an isolated vertex.

\begin{theorem}\label{case1}(\cite{GIPS}, Theorem 4.4)
Let $G$ be a connected graph, $H$ a graph and $k\geq 2$ an integer. If $\gamma_{[1,k]}(H)>k$, and $H$ contains an isolated vertex, then
\begin{equation*}
\gamma_{[1,k]}(G\circ H)=\left\{
\begin{array}{ccc}
\gamma_{t[1,k]}(G) & : & \gamma _{t[1,k]}(G)<\infty  \\
|V(G)|\cdot|V(H)| & : & \text{otherwise}%
\end{array}%
\right. .
\end{equation*}
\end{theorem}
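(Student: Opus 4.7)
The plan is to establish the upper and lower bounds on $\gamma_{[1,k]}(G\circ H)$ separately and combine them. Write $h_0$ for a fixed isolated vertex of $H$, and for any set $D\subseteq V(G\circ H)$ abbreviate $D_g:=D\cap H^g$. For the upper bound when $\gamma_{t[1,k]}(G)<\infty$, I would take a minimum total $[1,k]$-dominating set $D$ of $G$ and lift it to $D^*:=D\times\{h_0\}$. Because $h_0$ is isolated, every vertex $(g,h)\notin D^*$ has its neighbors in $D^*$ determined purely by $N_G(g)\cap D$ (the in-layer contribution vanishes since $hh_0\notin E(H)$). Thus the total $[1,k]$ property of $D$ transfers verbatim to the $[1,k]$-dominating property of $D^*$, yielding $\gamma_{[1,k]}(G\circ H)\leq\gamma_{t[1,k]}(G)$. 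In general, $V(G\circ H)$ is vacuously a $[1,k]$-dominating set, so $\gamma_{[1,k]}(G\circ H)\leq|V(G)|\cdot|V(H)|$.

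For the lower bound, let $D^*$ be any $[1,k]$-dominating set of $G\circ H$, and set $D_G:=\{g\in V(G):D^*_g\neq\emptyset\}$ so that $|D_G|\leq|D^*|$. The key structural step is the following claim: if $D^*\neq V(G\circ H)$, then $D^*_g\subsetneq V(H^g)$ for every $g$. To prove this, I would consider $S=\{g:D^*_g=V(H^g)\}$ and show $S$ is closed under taking $G$-neighbors: if $g\in S$ and $g'\in N_G(g)$ with $D^*_{g'}\neq V(H^{g'})$, then any $(g',h)\notin D^*$ has at least $|D^*_g|=|V(H)|$ neighbors in $D^*$, forcing $|V(H)|\leq k$ and contradicting $|V(H)|\geq\gamma_{[1,k]}(H)>k$. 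By connectedness of $G$ and the assumption $D^*\neq V(G\circ H)$, the set $S$ must be empty, proving the claim.

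With the claim in hand, I would verify that $D_G$ is a total $[1,k]$-dominating set of $G$. For a given $g\in V(G)$, I would examine the $[1,k]$-constraint either at $(g,h_0)$ when $(g,h_0)\notin D^*$, or at another uncovered vertex $(g,h)$ of $H^g$ furnished by the claim. In the first case the neighbors of $(g,h_0)$ in $D^*$ lie entirely in other layers and their number equals $\sum_{g'\in N_G(g)}|D^*_{g'}|$, which lies in $[1,k]$ and immediately bounds $|N_G(g)\cap D_G|$ from both sides. In the second case the upper bound $|N_G(g)\cap D_G|\leq k$ is again immediate, while $|N_G(g)\cap D_G|\geq 1$ requires ruling out $|N_G(g)\cap D_G|=0$: such a $g$ would force $D^*_g$ to be a $[1,k]$-dominating set of $H$, hence $|D^*_g|>k$, and then the upper-bound constraint at $(g',h_0)\notin D^*$ for any $g'\in N_G(g)$ gives the same contradiction as in the key claim. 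This delivers $\gamma_{t[1,k]}(G)\leq|D_G|\leq|D^*|$, matching the earlier upper bound when $\gamma_{t[1,k]}(G)$ is finite, and in the infinite case forcing $D^*=V(G\circ H)$, so $\gamma_{[1,k]}(G\circ H)=|V(G)|\cdot|V(H)|$. The chief obstacle will be the lower-bound case split, where the strict inequality $\gamma_{[1,k]}(H)>k$ has to be invoked repeatedly and carefully to rule out the pathological sub-configurations described above.
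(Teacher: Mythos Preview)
The paper does not prove this theorem; it is quoted from \cite{GIPS} and used as a black box, so there is no in-paper argument to compare your proposal against.

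Judged on its own merits, your outline is correct. The upper bound via $D\times\{h_0\}$ is fine precisely because $h_0$ is isolated, so the in-layer contribution vanishes and the total $[1,k]$ condition on $D$ transfers directly. For the lower bound your key claim (no full layer unless $D^*=V(G\circ H)$) is the right structural observation, and the two-case verification that $D_G$ is total $[1,k]$-dominating goes through as you describe; in particular, the Case~2 lower bound really does force $D^*_g$ to project to a $[1,k]$-dominating set of $H$, and then the $>k$ hypothesis finishes it at a neighboring $(g',h_0)$. Two small points you should make explicit when writing it out: first, when $\gamma_{t[1,k]}(G)<\infty$ you need to observe that the upper bound already gives $\gamma_{[1,k]}(G\circ H)\le\gamma_{t[1,k]}(G)\le|V(G)|<|V(G)|\,|V(H)|$, so a minimum $D^*$ is automatically proper and your claim applies; second, your Case~2 contradiction uses a neighbor $g'\in N_G(g)$, so you are implicitly assuming $|V(G)|\ge 2$. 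This is consistent with how the result is used in the present paper, but worth stating.
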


The following corollary is the direct consequence of Theorem \ref{case1} and will be useful later to construct bipartite graphs with $\gamma_{[1,k]}(G)=\vert V(G)\vert$.

\begin{corollary}\label{reduction-proof}
Let $G$ be a connected graph and $H\cong\overline{K_{k+1}}$. Then $\gamma_{[1,k]}(G \circ H)=\vert V(G \circ H)\vert$ if and only if $G$ has no total $[1,k]$-dominating set.
\end{corollary}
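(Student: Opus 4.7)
The plan is to reduce directly to Theorem~\ref{case1} by verifying its two hypotheses for $H\cong\overline{K_{k+1}}$. The isolation condition is immediate, since every vertex of $\overline{K_{k+1}}$ is isolated. For the condition $\gamma_{[1,k]}(H)>k$, I would argue that in an edgeless graph every vertex outside a would-be $[1,k]$-dominating set $D$ has zero neighbors in $D$, violating the requirement of having at least one neighbor in $D$, so $D$ is forced to equal all of $V(H)$. This gives $\gamma_{[1,k]}(\overline{K_{k+1}})=k+1>k$, as required.

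With both hypotheses of Theorem~\ref{case1} checked, the theorem yields the dichotomy $\gamma_{[1,k]}(G\circ H)=\gamma_{t[1,k]}(G)$ when $\gamma_{t[1,k]}(G)<\infty$, and $\gamma_{[1,k]}(G\circ H)=|V(G)|\cdot|V(H)|=|V(G\circ H)|$ otherwise. To finish, I would observe that the two branches cannot accidentally coincide: in the finite branch one has $\gamma_{t[1,k]}(G)\leq |V(G)|<|V(G)|(k+1)=|V(G\circ H)|$, the strict inequality using $k\geq 2$. Hence $\gamma_{[1,k]}(G\circ H)=|V(G\circ H)|$ holds if and only if we are in the second branch of the dichotomy, which is precisely the condition that $G$ has no total $[1,k]$-dominating set. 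The only point requiring any care is this separation of the two branches; beyond that the corollary is a direct invocation of Theorem~\ref{case1}.
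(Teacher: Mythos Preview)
Your proposal is correct and matches the paper's approach exactly: the paper states this corollary as a direct consequence of Theorem~\ref{case1} without further proof, and you have simply supplied the routine verification of its hypotheses together with the observation that the two branches of the dichotomy are distinguished by $\gamma_{t[1,k]}(G)\le |V(G)|<(k+1)|V(G)|$.
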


\section{Complexity}\label{complexity}

In this section we will show that it is $NP$-hard to check whether $\gamma_{[1,k]}(G)=\vert V(G) \vert$ for a bipartite graph $G$.
For this aim, we first show that a related problem to check whether $\gamma_{t[1,k]}(G)=\vert V(G) \vert$ is $NP$-hard for a bipartite graph $G$. The problem is called a BipTotal $[1,k]$-set problem. To prove this we use reduction from a kind of set cover problem, called $[1,k]$-triple set cover problem, which is known to be  $NP$-hard as shown in \cite{golovach2007computational}. Then, using Theorem \ref{case1}, we prove that for a bipartite graph $G$, checking whether $\gamma_{[1,k]}(G)=\vert V(G) \vert$ is an $NP$-hard problem.

\begin{alignat*}{3}
&\text {\it Problem A:\,\,\,\,\,\,\,} &&[1,k]\text{-triple set cover}\\
&\text {\it Input:} &&\text {A finite set } X=\{x_1,\ldots, x_{n} \}  \text { and a collection } C=\{C_1,\ldots, C_t\} \\
& &&\text {of 3-element subsets of } X.\\
&\text {\it Output:} &&\text {{\bf Yes} if there exists a }  C'\subseteq C \text{ such that every element of } X
\text{ appears in }
 \\
 &   && \text{at least one and at most } k\text{ elements of } C',\text{{\bf No} otherwise.}
\end{alignat*}

\begin{alignat*}{3}
&\text {\it Problem B:\,\,\,\,\,\,\,} &&\text{BipTotal }[1,k]\text{-set}\\
&\text {\it Input:} &&\text {A bipartite graph } G. \\
&\text {\it Output:} &&\text {{\bf Yes}  if there exists a } D\subseteq V(G) \text{ such that every element of } V(G) \text{ is adjacent to } \\
	&  &&\text{ at least one and at most } k\text{ vertices of } D,\text{{\bf No} otherwise.}
\end{alignat*}

We are going to prove that the BipTotal $[1,k]$-set  problem is $NP$-hard by giving a polynomial time reduction from $[1,k]$-triple set cover problem.

\begin{definition}
Let $X = \{x_{1},\ldots, x_{n}\}$ and $C = \{C_{1}, \ldots,C_{t}\}$ be any given instance of Problem A. We construct a graph $G_{X,C}$ as follows:
$$ V(G_{X,C})=\bigcup_{i=1}^{t} (P_i \cup L_i) \cup X \cup \{c_1, \ldots, c_t\},$$
where for each integer $i$, $1\leq i\leq t$, we have $P_i=\{p_{i,1}, \ldots, p_{i,k}\}$, $L_i=\{l_{i,1}, \ldots, l_{i,k}\}$, and
	$$E(G_{X,C})=\bigcup_{1 \leq j \leq t}\{c_jp_{j,1}, \ldots , c_jp_{j,k},p_{j,1}l_{j,1}, \ldots , p_{j,k}l_{j,k} \} \cup \bigcup_{1 \leq i,j \leq t } \{x_{i}c_{j}: x_{i}\in C_{j}\}.$$
\end{definition}

\begin{lemma}\label{setcovertotal}
	 Let $X = \{x_{1},\ldots,x_{n}\}$ be a finite set and $C = \{C_{1}, \ldots, C_{t}\}$ be a collection of $3$-element subsets of $X$. Problem A for $(X,C)$ is a YES instance if and only if $G_{X,C}$ is a YES instance of Problem B.
\end{lemma}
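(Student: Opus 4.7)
The plan is to give a direct bi-implication by constructing explicit certificates in both directions, exploiting the pendant structure attached to each $c_j$ in $G_{X,C}$.

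Two structural observations drive the argument. First, every $l_{j,i}$ has degree $1$ in $G_{X,C}$ with unique neighbor $p_{j,i}$; hence in any total $[1,k]$-dominating set $D$, one must have $p_{j,i}\in D$ for all $i,j$. Second, once every $p_{j,i}$ lies in $D$, the vertex $c_j$ is already adjacent to exactly $k$ members of $D$ inside $P_j$, so the cardinality constraint forces $x_i\notin D$ for every $x_i\in C_j$.

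For the forward direction, given a witness $C'\subseteq C$ for Problem A, I take
\[
D \;=\; \{c_j : C_j\in C'\}\;\cup\;\bigcup_{j=1}^{t}\bigl(P_j\cup L_j\bigr)
\]
and verify the total $[1,k]$-property vertex by vertex: each $x_i$ has $|\{j : x_i\in C_j,\ C_j\in C'\}|\in[1,k]$ neighbors in $D$ by hypothesis on $C'$; each $c_j$ has its $k$ neighbors in $P_j$ as its only $D$-neighbors (no $x_i$ is in $D$); each $p_{j,i}$ has $l_{j,i}\in D$, plus possibly $c_j$, for at most $2\le k$ neighbors in $D$; and each $l_{j,i}$ has its unique neighbor $p_{j,i}$ in $D$. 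Throwing all of $\bigcup_j L_j$ into $D$ is precisely what ensures the internal pendants $p_{j,i}$ are totally dominated when $c_j\notin D$.

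For the reverse direction, given a total $[1,k]$-dominating set $D$ of $G_{X,C}$, I may assume every $x_i$ lies in some $C_j$, as otherwise $x_i$ is isolated in $G_{X,C}$ and both instances are trivially NO. The two observations above then yield $P_j\subseteq D$ for every $j$ and $x_i\notin D$ for every $i$. Totality at $x_i$ forces some $c_j\in D$ with $x_i\in C_j$, and the $[1,k]$-bound at $x_i$ caps the number of such $j$ by $k$, so $C'=\{C_j : c_j\in D\}$ is a valid witness for Problem A. The main obstacle is spotting the need to include the entire layer $\bigcup_j L_j$ in $D$ in the forward construction; without it the internal vertices $p_{j,i}$ would fail to be totally dominated precisely when $C_j\notin C'$.
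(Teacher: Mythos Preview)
Your proof is correct and follows essentially the same approach as the paper: both directions hinge on the observations that every $p_{j,i}$ must lie in any total $[1,k]$-dominating set (forcing each $c_j$ to already have $k$ neighbours in $D$) and hence no $x_i$ can lie in $D$, so that $C'=\{C_j:c_j\in D\}$ is the desired witness. The only cosmetic difference is in the forward construction: the paper puts $L_j$ into $D$ only when $C_j\notin C'$, whereas you include all of $\bigcup_j L_j$; both choices work since Problem~B asks only for existence and, with $k\geq 2$, the vertices $p_{j,i}$ still have at most two neighbours in $D$ under your larger set.
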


\begin{proof}
{Suppose that $C'$ is a solution for the instance $(X,C)$ of Problem A. We construct $D$ as follows:
	$$D= \bigcup_{1 \leq j \leq t}P_j \cup \bigcup_{C_{j} \in C'}\{c_j\} \cup \bigcup_{C_{j} \notin C'}L_j.$$
	
	We can check easily  that $D$ is a $[1, k]$-total set for $G_{X,C}$.
	Conversely, suppose that $G_{X,C}$ has a total $[1, k]$-set $D$. Clearly $D$ must contain all vertices of $P_j$ because every $p_{j,j'}$ is adjacent to at least one leaf $l_{j,j'}$.
	These vertices dominate every $c_j$ exactly $k$ times. Therefore, there is no vertex $x_i$ in $D$; in other words $D\cap X= \emptyset$. So, every $x_i$ must be dominated by a vertex of $\{c_1, \cdots , c_t\}$. It is easy to see that there is a solution  $C' \subseteq C$ for $[1,k]$-triple set cover problem if and only if  the corresponding vertices $C'$ of $V(G)$  dominate all vertices of $\{x_1, \ldots , x_n\}$ at least once and at most $k$ times.
These vertices dominate all vertice $\{p_{j,1}, \ldots, p_{j,k}\}$ for $c_j \in C'$. To dominate all other vertices we add $\{l_{j,1}, \ldots, l_{j,k}\}$  to $D$ for $c_j \notin C'$.}
\end{proof}

The following example help us to understand the definition and the Lemma.
\begin{example}\label{example}
Let $X=\{x_1,x_2,x_3,x_4,x_5,x_6,x_7,x_8,x_9\}$ and $C=\{C_1,C_2,C_3,C_4,C_5\}$ such that $C_1=\{x_1,x_2,x_4\}$, $C_2=\{x_2,x_5,x_7\}$, $C_3=\{x_4,x_5,x_6\}$, $C_4=\{x_3,x_5,x_9\}$ and $C_5=\{x_3,x_8,x_9\}$. For $k=3$, the corresponding graph $G_{X,C}$ is shown in Figure \ref{fig1}.
 This  is a YES-instance for the $[1,3]$-set cover problem $(X,C)$, because $C'=\{C_1,C_2,C_3,C_5\}$ has the desired property. The vertices of total $[1,3]$-set are black vertices shown in the figure.

\begin{figure}[h!]
	\centering
	\includegraphics[width=0.95\textwidth]{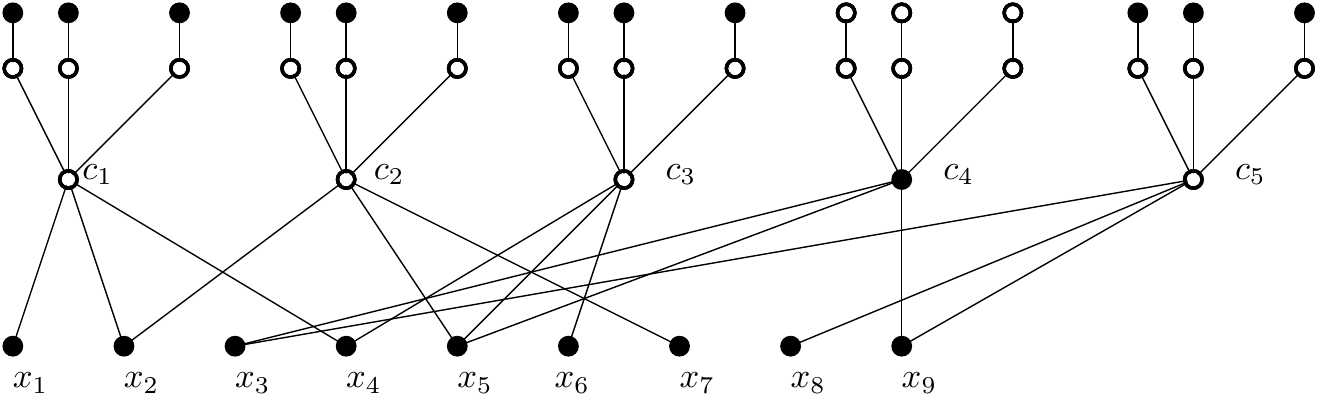}
	\caption{$G_{X,C}$ from Example \ref{example}.}
	
	\label{fig1}
\end{figure}

\end{example}

\begin{theorem}\label{Complexity2}
	The BipTotal $[1, k]$-set  problem is $NP$-hard.
\end{theorem}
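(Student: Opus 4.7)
The plan is to set up a polynomial-time many-one reduction from Problem A ($[1,k]$-triple set cover), which is known to be $NP$-hard by \cite{golovach2007computational}, to Problem B (BipTotal $[1,k]$-set). The reduction itself is already provided by the construction $G_{X,C}$, and Lemma \ref{setcovertotal} already establishes the crucial equivalence ``$(X,C)$ is a YES-instance of Problem A $\iff$ $G_{X,C}$ is a YES-instance of Problem B''. So the proof of the theorem reduces to checking three easy things: that the reduction runs in polynomial time, that its output is indeed a bipartite graph (which is required by Problem B), and then invoking Lemma \ref{setcovertotal}.

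First I would count the size of $G_{X,C}$. For an instance with $|X|=n$ and $|C|=t$, the vertex set has $2kt + n + t$ vertices, and the edge set has $2kt + 3t$ edges (each $C_j$ contributes $k$ edges of type $c_jp_{j,r}$, $k$ edges of type $p_{j,r}l_{j,r}$, and exactly $3$ edges of type $x_ic_j$ because $|C_j|=3$). For fixed $k$ this is linear in the input size, and in any case polynomial; the construction can clearly be carried out in polynomial time.

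Next I would verify that $G_{X,C}$ is bipartite by exhibiting an explicit 2-coloring. Put
\[
A = X \cup \bigcup_{j=1}^{t}\{p_{j,1},\dots,p_{j,k}\}, \qquad B = \{c_1,\dots,c_t\} \cup \bigcup_{j=1}^{t}\{l_{j,1},\dots,l_{j,k}\}.
\]
Every edge of the form $c_jp_{j,r}$ goes from $B$ to $A$, every edge $p_{j,r}l_{j,r}$ from $A$ to $B$, and every edge $x_ic_j$ from $A$ to $B$, so $(A,B)$ is a valid bipartition and $G_{X,C}$ is bipartite.

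Finally, combining these observations with Lemma \ref{setcovertotal} gives a polynomial-time reduction from Problem A to Problem B with bipartite output, so $NP$-hardness of Problem A transfers to Problem B. The only conceptual subtlety is the bipartiteness check, since Problem B is defined only on bipartite inputs and the reduction would be useless otherwise; the 2-coloring above takes care of it. Everything else is bookkeeping.
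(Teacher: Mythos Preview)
Your proposal is correct and follows the same approach as the paper: reduce from $[1,k]$-triple set cover via the construction $G_{X,C}$ and invoke Lemma~\ref{setcovertotal}. The paper's own proof is just a two-line citation of \cite{golovach2007computational} and Lemma~\ref{setcovertotal}, leaving the polynomial-time bound and the bipartiteness of $G_{X,C}$ implicit; you simply make these two routine checks explicit, which is a welcome addition rather than a different route.
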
	
\begin{proof}

 By Lemma 1 of \cite{golovach2007computational} the $[1,k]$-triple set cover problem is $NP$-hard. Hence, using Lemma \ref{setcovertotal} the BipTotal $[1, k]$-set  problem is also $NP$-hard.
\end{proof}


The following theorem which is the main result of this section is a direct consequence of  Theorem~\ref{Complexity2} and Corollary \ref{reduction-proof}.
\begin{theorem}\label{Complexityequal1}
 For bipartite graphs it is $NP$-hard to decide whether $\gamma_{[1,k]}(G)=\vert V(G) \vert$.
\end{theorem}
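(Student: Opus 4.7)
The plan is to give a polynomial-time reduction from the complement of the BipTotal $[1,k]$-set problem (NP-hard by Theorem \ref{Complexity2}) to the problem of deciding whether $\gamma_{[1,k]}(G')=|V(G')|$ for a bipartite input $G'$. Corollary \ref{reduction-proof} will supply that reduction essentially for free, once one observes that the lexicographic product of a bipartite graph with an empty graph stays bipartite.

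Concretely, given a bipartite graph $G$ viewed as an instance of the BipTotal $[1,k]$-set problem, I would set $G' := G \circ \overline{K_{k+1}}$. This graph can be constructed in time polynomial in $|V(G)|$, and since one factor is empty while the other is bipartite, $G'$ is bipartite as noted in Section \ref{notation}. Corollary \ref{reduction-proof} then gives that $\gamma_{[1,k]}(G')=|V(G')|$ if and only if $G$ admits no total $[1,k]$-dominating set. In particular, $G$ is a NO instance of the BipTotal $[1,k]$-set problem precisely when $G'$ is a YES instance of the target problem, so NP-hardness transfers.

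The main obstacle is that this mapping inverts YES and NO answers. I would handle it by appealing to the standard fact that NP-hardness is invariant under complementation (under polynomial-time Turing reductions); hence the complement of the BipTotal $[1,k]$-set problem is itself NP-hard, and so is our target problem. A minor bookkeeping point is that Corollary \ref{reduction-proof} requires $G$ to be connected; this is not restrictive, since the instances $G_{X,C}$ produced in Lemma \ref{setcovertotal} are connected whenever every $x_i\in X$ belongs to some $C_j$, an assumption that can be imposed on the $[1,k]$-triple set cover input without loss of generality.
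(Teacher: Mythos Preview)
Your approach is precisely the paper's: the paper states the theorem as a direct consequence of Theorem~\ref{Complexity2} and Corollary~\ref{reduction-proof}, and you have spelled that deduction out, even flagging the complementation and connectedness issues that the paper leaves implicit. One small correction on the latter point: requiring that every $x_i$ lie in some $C_j$ does not by itself force $G_{X,C}$ to be connected (e.g.\ two triples on disjoint element sets yield two components); you need the incidence hypergraph $(X,C)$ itself to be connected, which is still a harmless assumption for the reduction, or alternatively you can apply Corollary~\ref{reduction-proof} componentwise.
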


\section{Construction}

Here, for any integers $k\geq 2$ and $n\geq (k+1)(2k+3)$,  we construct a bipartite graph $G$ of order $n$ with $\gamma_{[1,k]}(G)=n$. As already mentioned, in \cite{yang20141} a bipartite graph $G$ of order $n$ was constructed for sufficiently large integer $n$ which satisfies $\gamma_{[1,2]}(G)=n$.

First, we give our construction in the case of $k=2$, then we extend the result to the general case.

\begin{example}\label{exam21}
If $G_1\cong S(2,2,2)$, see Figure \ref{figS(2,2,2)}, and $H\cong\overline{K_3}$, then $G=G_1\circ H$, see Figure \ref{figprod21}, is bipartite and $\gamma_{[1,2]}(G)=|V(G)|$ by Theorem \ref{case1}.

\begin{figure}[h!]
   \centering
     \includegraphics[width=0.4\textwidth]{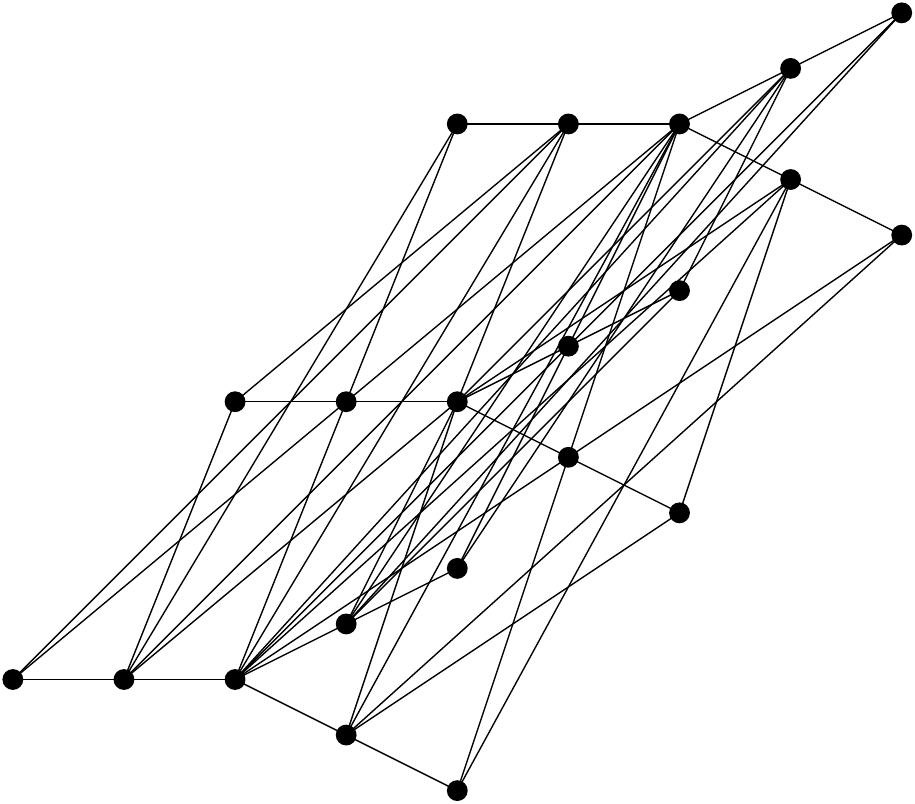}
      \caption{Bipartite graph $G$ with $\gamma_{[1,2]}(G)=|V(G)|$.}
      \label{figprod21}
\end{figure}

\end{example}

\begin{theorem}\label{n>21}
For any integer $n\geq 21$, there exits a bipartite graph $\Gamma$ with $n$ vertices such that $\gamma_{[1,2]}(\Gamma)=n$.
\end{theorem}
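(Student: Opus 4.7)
The plan is to bootstrap directly from Example \ref{exam21}, which already exhibits a bipartite graph $G = G_1 \circ H$ on exactly $21$ vertices with $\gamma_{[1,2]}(G) = 21$. For a general $n \geq 21$, I would let $\Gamma$ be the disjoint union of this fixed $G$ with $n - 21$ extra isolated vertices. Bipartiteness is preserved trivially: the added vertices contribute no edges, so a bipartition of $G$ extends to a bipartition of $\Gamma$ by placing each isolated vertex in either colour class arbitrarily.

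The key observation forcing $\gamma_{[1,2]}(\Gamma) = n$ is that an isolated vertex has empty open neighbourhood, hence cannot be dominated by any other vertex, hence must lie in every $[1,2]$-dominating set. Because the added vertices carry no edges, they also do not affect the number of times any vertex of $G$ is dominated, so the restriction $D \cap V(G)$ of any $[1,2]$-dominating set $D$ of $\Gamma$ is itself a $[1,2]$-dominating set of $G$. Combining these two facts,
\[
|D| \;\geq\; (n-21) + \gamma_{[1,2]}(G) \;=\; (n-21) + 21 \;=\; n,
\]
and the matching upper bound is given by $D = V(\Gamma)$.

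There is no real obstacle in this argument; the only thing to notice is that isolated vertices behave as forced elements of any $[1,2]$-dominating set and thus absorb the padding in the cheapest possible way, reducing the whole statement to the single base case handed to us by Example \ref{exam21}. If one later wished a connected construction instead, the argument would have to be redone with pendant attachments that still prevent total $[1,2]$-domination on the host factor, but for the theorem as stated the disjoint-union trick suffices.
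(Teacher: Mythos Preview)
Your argument is correct: the disjoint union of $G$ with $n-21$ isolated vertices is bipartite, every isolated vertex is forced into any $[1,2]$-dominating set, and the restriction to $V(G)$ must itself be a $[1,2]$-dominating set of $G$, giving $|D|\geq (n-21)+21=n$.

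The paper takes a different route. Rather than padding with isolated vertices, it attaches $t=n-21$ new vertices $a_1,\ldots,a_t$, each adjacent to all three vertices of the $H$-layer $H^{v_2}$, and then argues directly that any $[1,2]$-dominating set $S$ of the resulting graph is forced to be all of $V(\Gamma)$: one first shows $H^{v_2}\cap S\neq\emptyset$ (and similarly for the other degree-$2$ vertices of $G_1$), which over-dominates $H^{v_7}$, which over-dominates $H^{v_2}\cup H^{v_4}\cup H^{v_6}$, which in turn over-dominates the leaf layers and the $a_i$'s. Your approach is shorter and more transparent, and reduces everything to the single base case; the paper's approach has the advantage of producing a \emph{connected} bipartite graph, and of serving as the template for the general $k\geq 2$ case treated immediately afterwards. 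Since the theorem as stated does not demand connectedness, your proof suffices for it as written.
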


\begin{proof}
Let $G_1$ and $G$ be graphs shown on Figures \ref{figS(2,2,2)} and \ref{figprod21}, respectively. By Example \ref{exam21} $G$ is a bipartite graph with $21$ vertices for which $\gamma_{[1,2]}(G)=21$. Let $v_1,\ldots, v_7\in V(G_1)$ be the vertices of $G_1$ as shown on Figure \ref{figS(2,2,2)}. For any integer
$t\geq 1$, using the graph $G$ of Figure \ref{figprod21}, we construct a new bipartite graph $\Gamma$ of order $n=21+t$
as follows: $\Gamma=(V(\Gamma), E(\Gamma))$, where $V(\Gamma)=V(G) \cup \{a_1, \ldots, a_t\}$ and $E(\Gamma)=  \bigcup_{h\in H}\{a_1(v_2,h), \ldots, a_t(v_2,h)\}\cup E(G) $ (see Figure \ref{figprod21+t}).
\begin{figure}[h!]
   \centering
     \includegraphics[width=0.6\textwidth]{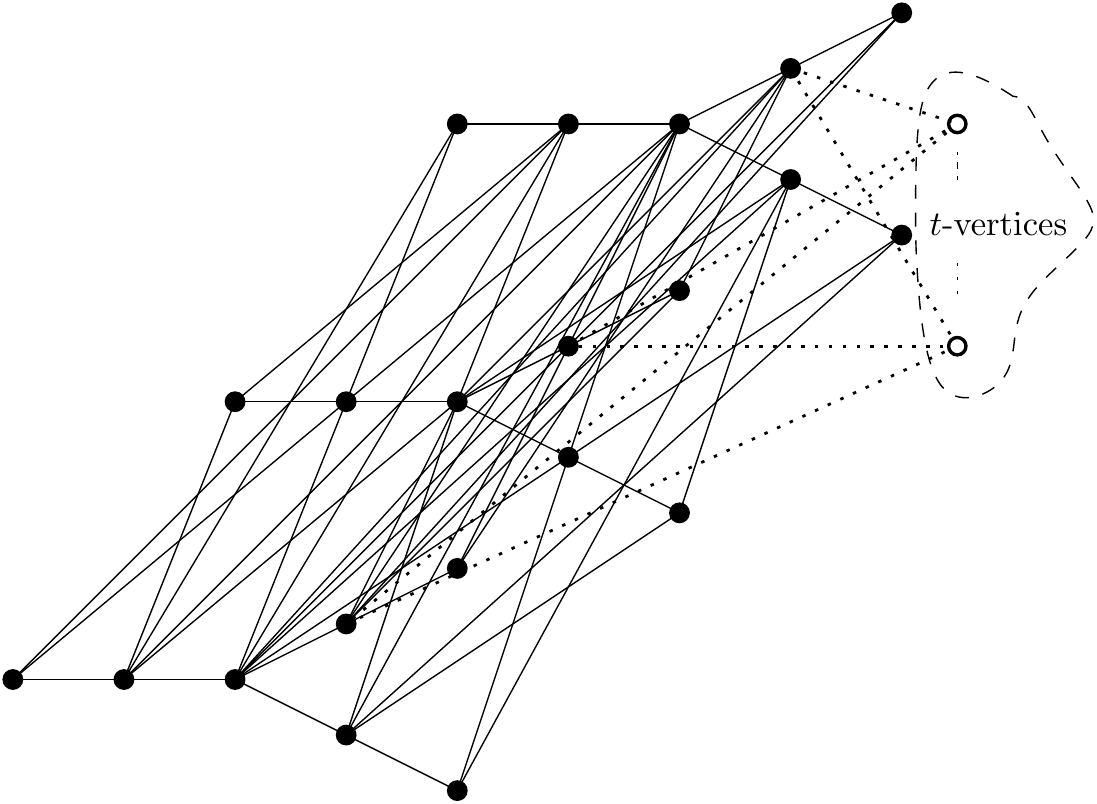}
      \caption{Bipartite graph $\Gamma$ with $\gamma_{[1,2]}(\Gamma)=|V(\Gamma)|$.}
      \label{figprod21+t}
\end{figure}

Let $S$ be a $[1,2]$-set for $\Gamma$. First, we claim that there exist a vertex $h\in H$ with $(v_2, h)\in S$.
To dominate the three vertices of the $H$-layer $H^{v_1}$, either there exits vertex $h\in H$ with $(v_2, h)\in S$ or $H^{v_1}\subseteq S$. If there exits vertex $h\in H$  with $(v_2, h)\in S$, then there is nothing to prove.
If $H^{v_1}\subseteq S$, then every vertex of $H^{v_2}$ is dominated at least three times, hence $H^{v_2}\subseteq S$. Therefore the claim is true and
$H^{v_2}\cap S \neq \emptyset$.
By the same reasoning we have
$H^{v_4}\cap S \neq \emptyset$ and $H^{v_6}\cap S \neq \emptyset$. Hence, by the definition of lexicographic product of graphs, every vertex of $H^{v_7}$ is dominated at least three times. Therefore, we have
\begin{equation}\label{v7}
    H^{v_7}\subseteq S.
\end{equation}

Now, by (\ref{v7}) every vertex in $H^{v_2}\cup H^{v_4} \cup H^{v_6}$ is dominated at least three times and so we have
\begin{equation}\label{v246}
H^{v_2}\cup H^{v_4} \cup H^{v_6}\subseteq S.
\end{equation}
And, then by (\ref{v246}) we conclude that:
\begin{equation}\label{v135}
H^{v_1}\cup H^{v_3} \cup H^{v_5} \cup  \{a_1, \ldots, a_t\} \subseteq S.
\end{equation}

Therefore, $S=V(\Gamma)$, as desired.
\end{proof}

We end with a generalization of the above result from $k=2$ to $k\geq 2$.

\begin{theorem}
For integers $k\geq 2$ and $n\geq (k+1)(2k+3)$ there exists a bipartite graph $\Gamma$ with $n$ vertices such that $\gamma_{[1,k]}(\Gamma)=n$.
\end{theorem}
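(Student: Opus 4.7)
The plan is to replay the argument of Theorem \ref{n>21} with the base graph $S(2,2,2)$ replaced by the star-like tree $G_1\cong S(\underbrace{2,2,\ldots,2}_{k+1})$, whose central vertex $v$ is joined to $k+1$ middle vertices $v_1,\ldots,v_{k+1}$, each carrying its own pendant leaf $v_i'$. This $G_1$ is a connected bipartite tree on exactly $2k+3$ vertices, and I claim it admits no total $[1,k]$-dominating set: in any total dominating set $D$, each leaf $v_i'$ forces its unique neighbor $v_i$ into $D$, so $\{v_1,\ldots,v_{k+1}\}\subseteq D$ and then $v$ is adjacent to all $k+1$ of these, violating the $[1,k]$-bound. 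Taking $H\cong\overline{K_{k+1}}$, Corollary \ref{reduction-proof} gives $\gamma_{[1,k]}(G_1\circ H)=|V(G_1\circ H)|=(k+1)(2k+3)$, which settles the theorem for $n=(k+1)(2k+3)$.

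For $n=(k+1)(2k+3)+t$ with $t\geq 1$, I would form $\Gamma$ from $G=G_1\circ H$ by adding $t$ new vertices $a_1,\ldots,a_t$, each joined to every vertex of the $H$-layer $H^{v_1}$, exactly as in Theorem \ref{n>21}. Because the $a_j$ attach to a single bipartition class of $G$, the graph $\Gamma$ remains bipartite of order $n$. To show $\gamma_{[1,k]}(\Gamma)=n$, let $S$ be an arbitrary $[1,k]$-dominating set of $\Gamma$ and cascade as follows. First, if $S\cap H^{v_i}=\emptyset$ for some $i$, then every vertex of $H^{v_i'}$ must lie in $S$ (its only neighbors are in $H^{v_i}$), but then each vertex of $H^{v_i}$ sees all $k+1$ members of $H^{v_i'}\subseteq S$, violating the $[1,k]$-condition; hence $S\cap H^{v_i}\neq\emptyset$ for every $i$. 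Second, every vertex of $H^v$ is then adjacent to at least $k+1$ vertices of $S$ (one per $H^{v_i}$), forcing $H^v\subseteq S$. Third, every vertex of each $H^{v_i}$ is adjacent to the $k+1$ vertices of $H^v\subseteq S$, whence $H^{v_i}\subseteq S$ for each $i$. Finally, the same over-domination argument gives $H^{v_i'}\subseteq S$ for each $i$ and $a_j\in S$ for each $j$ (since $a_j$ sees all $k+1$ vertices of $H^{v_1}\subseteq S$), yielding $S=V(\Gamma)$.

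The construction is a direct numerical generalization of the $k=2$ case, and the crucial design choice — matching $|H|=k+1$ to the $k+1$ arms of $G_1$ — is what both blocks the existence of a total $[1,k]$-dominating set in $G_1$ and drives the cascade in $\Gamma$. The only step that deserves care is the first cascade step: one must verify that $S\cap H^{v_i}=\emptyset$ is genuinely incompatible with the $[1,k]$-constraint at vertices of $H^{v_i}$, which hinges on $|H^{v_i'}|=k+1$ being exactly one above the threshold the bound sees. Beyond that, every step is the obvious substitution of $k+1$ for $3$ in the $k=2$ proof.
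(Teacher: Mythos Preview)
Your proposal is correct and follows essentially the same approach as the paper: the same base graph $G_1\cong S(2,\ldots,2)$ with $k+1$ arms, the same product with $\overline{K_{k+1}}$, the same attachment of $t$ pendant vertices to a middle $H$-layer, and the same cascade argument forcing $S=V(\Gamma)$. Your write-up is in fact more explicit than the paper's, which simply refers back to the $k=2$ case for the cascade and does not spell out the base case $t=0$ via Corollary~\ref{reduction-proof} as you do.
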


\begin{proof}
Let $G_1=S(2,2, \ldots, 2)$ be a star-like tree with $2k+3$ vertices and let $H\cong\overline{K_{k+1}}$. Clearly $G=G_1\circ H$ is a bipartite graph. Let $v_1\in V(G_1)$ be a vertex of degree one and  $v_2\in V(G_1)$ be its only neighbor. For any integer $t\geq 1$, using the graph $G$, we  construct a new bipartite graph $\Gamma$ of order $n=(k+1)(2k+3)+t$ as follows: $\Gamma=(V(\Gamma), E(\Gamma))$, where $V(\Gamma)=V(G) \cup \{a_1, \ldots, a_t\}$ and $E(\Gamma)=  \bigcup_{h\in H}\{a_1(v_2,h), \ldots, a_t(v_2,h)\}\cup E(G) $.

Let $S$ be a $\gamma_[1,k](\Gamma)$-set. By the same reasoning as in the proof of Theorem \ref{n>21} one can show that $H^{v_i}\cap S \neq \emptyset$ for every vertex $v_i\in V(G_1)$ with $\deg_{G_1}(v_i)=2$. Since there are $k+1$ such vertices in $G_1$, all vertices of $H^v$ must be in $S$ for a central vertex $v$ of $G_1$. This clearly leads to $\gamma_{[1,k]}(\Gamma)=|V(\Gamma)|$ because $|H^v|=k+1$.
\end{proof}


	\end{document}